\newtheorem{theorem}{Theorem}[section]
\newtheorem{remark}{Remark}[section]
\newtheorem{lemma}{Lemma}[section]
\newtheorem{corollary}{Corollary}[section]
\numberwithin{equation}{section}
\begin{document}

\title[Uncentered maximal function]{Uncentered maximal function for elliptic partial differential operator}

\author[C. Abdelkefi \and S. Chabchoub]{Chokri Abdelkefi* \and Safa Chabchoub**  }
\newcommand{\acr}{\newline\indent}
\address{\llap{*\,}
Department of Mathematics\acr Preparatory Institute of Engineer
Studies of Tunis \acr 1089 Monfleury Tunis, University of Tunis\acr
Tunisia} \email{chokri.abdelkefi@yahoo.fr}
\address{\llap{**\,} Department of Mathematics\acr Faculty of Sciences of Tunis\acr 1060
Tunis, University of Tunis El Manar\acr Tunisia}
\email{safachabchoub@yahoo.com}
\thanks{This work was completed with the support of the DGRST research project LR11ES11,
University of Tunis El Manar, Tunisia.}

\subjclass{42B10, 42B25, 44A15, 44A35.} \keywords{Weinstein
operator, Weinstein transform, Weinstein translation operators,
uncentered maximal operators }
\begin{abstract}
In the present paper, we study in the harmonic analysis associated
to the Weinstein operator, the boundedness on $L^p$ of the
uncentered maximal function. First, we establish estimates for the
Weinstein translation of characteristic function of a closed ball
 with
radius $\varepsilon$ centered at $0$ on the upper half space
$\mathbb{R}^{d-1}\times[ 0,+\infty[$. Second, we prove weak-type
$L^1$-estimates for the uncentered maximal function associated with
the Weinstein operator and we obtain that is bounded on $L^p$ for $1
< p \leq+\infty$.
\end{abstract}
\maketitle
\section{Introduction}
For a real parameter $\alpha>-\frac{1}{2}$ and $d\geq2$, the
Weinstein operator (also called Laplace-Bessel operator) is the
elliptic partial differential operator $\Delta_{d,\alpha}$ defined
on the upper half space $\mathbb{R}^d_{+}=\mathbb{R}^{d-1} \times\,
(0,+\infty)$ by
$$ \Delta_{d,\alpha} = \sum_{i=1}^d \frac{\partial^2}{\partial x_i^2}+ \frac{2\alpha+1}{x_d} \frac{\partial}{\partial x_d}.$$
The operator $\Delta_{d,\alpha}$ can be written as
$$\Delta_{d,\alpha} =\Delta_{d-1}+\mathcal{L}_\alpha,
$$ where $\Delta_{d-1} $ is the Laplacian operator on $
\mathbb{R}^{d-1}$ and $\mathcal{L}_\alpha $ is the Bessel operator
on $(0,+\infty)$ with respect to the variable $ x_d$ given by
$$\mathcal{L}_\alpha = \frac{\partial^2}{\partial x_d^2}+ \frac{2\alpha+1}{x_d}\frac{\partial}{\partial x_d}.$$
For $d>2$, the operator $\Delta_{d,\alpha}$ arises as the
Laplace-Beltrami operator on the Riemannian space $\mathbb{R}^d_{+}$
equipped with the metric $\displaystyle ds^2=
x_d^{\frac{4\alpha+2}{d-2}}\sum_{i=1}^{d} dx_i^2$. The Weinstein
operator $\Delta_{d,\alpha}$ has important applications in both pure
and applied mathematics, especially in the fluid mechanics (see
\cite{A}). Many authors were interested in the study of the
Weinstein equation $\Delta_{d,\alpha}u = 0$, one can cite for
instance M. Brelot \cite{M} and H. Leutwiler \cite{L}. The harmonic
analysis associated with the Weinstein operator was studied in
\cite{B,NZ}. In particular, the authors have introduced and studied
the generalized Fourier transform associated with the Weinstein
operator also called the Weinstein transform.

The Hardy-Littlewood maximal function was first introduced by Hardy
and Littlewood in 1930 for functions defined on the circle (see
\cite{GH}). Later it was extended to various Lie groups, symmetric
spaces, some weighted measure spaces (see \cite{J,GG,E,JO,TX,AT})
and different hypergroups (see \cite{W,WC,K}).

In this paper, we denote by $B^+(0,\varepsilon) =
B(0,\varepsilon)\cap \mathbb{R}^d_{+}$, the closed ball on
$\mathbb{R}^d_{+}$ with radius $\varepsilon$ centered at $0$. For $
x\in \mathbb{R}^d_{+},$ we establish estimates of the Weinstein
translation (see next section) of the characteristic function of
$B^+(0,\varepsilon)$,
$\tau_{x}(\chi_{B^+(0,\varepsilon)})(-y',y_d),$ based on the
inversion formula, where we put $y=(y',y_d)$ in $\mathbb{R}^d_{+}$
with $y'=(y_1,...,y_{d-1})$. Using these estimates, we prove the
 weak-type $(1, 1)$ of the uncentered maximal function $M_\alpha f$, defined
 for each integrable function $f$ on $(
\mathbb{R}^d_{+},\nu_\alpha)$ and $x\in \mathbb{R}^{d}_{+}$ by
$$\displaystyle M_\alpha f(x) = \sup_{\varepsilon > 0,\;z\in
B^+(x,\varepsilon)} \frac{1}{\nu_\alpha(B^+(0,\varepsilon))}
 \left|\int_{\mathbb{R}^d_{+}}
 f(y)
 \tau_{z}(\chi_{B^+(0,\varepsilon)})(-y',y_{d})d\nu_\alpha(y)\right|,$$
 where $\nu_\alpha$  is a
weighted Lebesgue measure associated with the Weinstein operator
(see next section)  and
   $B^+(x,\varepsilon) = B(x,\varepsilon)\cap \mathbb{R}^{d}_{+}$ the closed ball on
$\mathbb{R}^d_{+}$ with radius $\varepsilon$ centered at $x$.
Finally, we obtain the $L^p$-boundedness of $M_\alpha f$ when
 $1< p \leq+\infty$.

Bloom and Xu in \cite{WZ} have obtained analogous results for the
Ch\'{e}bli-Trim\`{e}che hypergroups. Later, similar results have
been established in \cite{Ab} for the harmonic analysis involving
the Dunkl operator on the real line.\\

 The contents of this paper are as follows.
 \\In section 2, we
collect some basic definitions and results about harmonic analysis
associated with Weinstein operator.\\In section 3, we establish in a
first step, estimates of  $\tau_{x}(\chi_{B^+(0,\varepsilon)})$, $x
\in\mathbb{R}^d_{+}$. In the second step, we prove that the
uncentered maximal function $M_\alpha f$ is of weak-type $(1, 1)$
and we obtain that is strong type $(p,p)$ for $1<p\leq+\infty$.

Along this paper, we denote $\langle .,.\rangle$ the usual Euclidean
inner product in $\mathbb{R}^d$ as well as its extension to
$\mathbb{C}^d \times\mathbb{C}^d$, we write for $x \in
\mathbb{R}^d,$ $\|x\| = \sqrt{\langle x,x\rangle}$. In the sequel
$c$ represents a suitable positive constant which is not necessarily
the same in each occurrence. Furthermore, we denote by

$\bullet\quad D_\ast( \mathbb{R}^{d})$ the space of
$C^\infty$-functions which are of compact support, even with respect
to the last variable.

$\bullet\quad S_\ast( \mathbb{R}^{d})$ the space of
$C^\infty$-functions which are rapidly decreasing together with
their derivatives, even with respect to the last variable.

\section{Preliminaries}
In this section, we recall some notations and results about harmonic
analysis associated with the Weinstein operator. \\For every $1 \leq
p \leq + \infty$, we denote by $L^{p}(\mathbb{R}^d_{+},\nu_\alpha)$
the space of measurable functions $f$ on $\mathbb{R}^d_{+}$ such
that
$$\|f\|_{p,\alpha} = \left(\int_{ \mathbb{R}^d_{+}}|f(x)|^p
d\nu_\alpha(x) \right)^{1/p} < + \infty,\quad \mbox{ if } p < +
\infty$$ and $$\|f\|_{\infty} = ess\sup_{x
\in\mathbb{R}^d_{+}}|f(x)| < + \infty,$$ where $\nu_\alpha $ is a
measure defined by
$$d\nu_\alpha(x)= \frac{x_{d}^{2\alpha+1}}{(2\pi)^{\frac{d-1}{2}} 2^{\alpha} \Gamma(\alpha+1)}\;dx
= \frac{x_{d}^{2\alpha+1}}{(2\pi)^{\frac{d-1}{2}}
2^{\alpha}\Gamma(\alpha+1)}\; dx_{1}...dx_{d}. $$ For a radial
function $f\in L^{1}(\mathbb{R}^d_{+},\nu_\alpha)$, the function $F$
defined on $\mathbb{R}_{+}$ such that $ f(x)= F(\|x\|)$, for all $
x\in \mathbb{R}^d_{+}$ is integrable with respect to the measure $\,
r^{2\alpha+d} dr$. More precisely, we have
\begin{eqnarray}
 \int_{ \mathbb{R}^d_{+}}f(x) d\nu_\alpha(x)=
\frac{1}{2^{\alpha+\frac{d-1}{2}} \Gamma(\alpha+\frac{d+1}{2})}
\int_{0}^{+\infty} F(r)r^{2\alpha+d} dr.
\end{eqnarray}
 For all $\lambda =(\lambda_{1},\lambda_{2},...,\lambda_{d}) \in
\mathbb{C}^{d}$, the system
$$
 \left\{
\begin{array}{ll}
 \frac{\partial^2 u}{\partial x_j^2}(x) &= - \lambda_j^2 u(x), \;  j=1,...,d-1, \\
 \mathcal{L}_\alpha u(x) &= - \lambda_d^2 u(x)\\
 u(0) &= 1,\;\frac{\partial u}{\partial x_d}(0)= 0,\;\frac{\partial u}{\partial x_j}(0)= - i \lambda_j,\; j=1,...,d-1,
\end{array}
\right.
$$
has a unique solution on $\mathbb{R}^d$, denoted by $\Psi_\lambda $
called the Weinstein kernel and given by
\begin{eqnarray}
 \Psi_\lambda (x) = e^{-i\langle x',\lambda'\rangle} j_\alpha(x_d \lambda_d).
\end{eqnarray}
Here $x=(x',x_d)\in \mathbb{R}^d_{+}$, $x'=(x_{1},...,x_{d-1}),
\lambda'=(\lambda_{1},...,\lambda_{d-1})$ and $j_\alpha$
 is the normalized
Bessel function of the first kind and order $\alpha$, defined by
\begin{eqnarray}
 j_\alpha(\lambda x) = \left\{ \begin{array}{ll}
2^\alpha \Gamma (\alpha +1) \,\frac{J_\alpha(\lambda x)}{(\lambda
x)^\alpha} &\mbox{ if } \lambda x \neq 0\\
1 &\mbox{if}\; \lambda x = 0,
\end{array}\right.
\end{eqnarray}
where $J_\alpha$ is the Bessel function of first kind and order
$\alpha$ (see \cite{GN}).\\
We have for all $x \in \mathbb{R}$, the
function $\lambda \rightarrow j_\alpha (\lambda x)$ is even on
$\mathbb{R}$.\\
The Weinstein kernel $ \Psi_\lambda (x)$ has a unique extension to $
\mathbb{C}^{d}\times \mathbb{C}^{d}$. It has the following
properties :
\begin{itemize}
\item[i)]$ \forall\; \lambda,z \in \mathbb{C}^{d},\; \Psi_\lambda (z)=\Psi_z (\lambda)$.
\item[ii)] $ \forall\; \lambda,z \in \mathbb{C}^{d},\; \Psi_\lambda (-z)=\Psi_{-\lambda}(z)$.
\item[iii)] $ \forall\; \lambda,x \in \mathbb{R}^{d}$
\begin{eqnarray}
 |\Psi_{\lambda} (x)|\leq 1.
\end{eqnarray}
\end{itemize}
There exists an analogue of the classical Fourier transform with
respect to the Weinstein kernel called the Weinstein transform and
denoted by $\mathcal{F}_{W}$. The Weinstein transform enjoys
properties similar to those of the classical Fourier transform and
is defined for $f \in L^{1}(\mathbb{R}^d_{+},\nu_\alpha)$ by
\begin{eqnarray}
\mathcal{F}_{W}(f)(\lambda) =
\int_{\mathbb{R}^{d}_{+}}f(y)\,\Psi_\lambda (y) \, d\nu_{\alpha}(y),
\quad \lambda \in \mathbb{R}^{d}_{+}.
\end{eqnarray}
 We list some known properties of this transform :
 \begin{itemize}
 \item[i)] For all $f \in L^{1}(\mathbb{R}^d_{+},\nu_{\alpha})$, we have
\begin{eqnarray}
 \|\mathcal{F}_{W}(f)\|_{\infty} \leq \|f\|_{1,\alpha}.
\end{eqnarray}

 \item[ii)] Let $f \in L^{1}(\mathbb{R}^d_{+},\nu_{\alpha})$. If $\mathcal{F}_{W}(f) \in L^1(\mathbb{R}^d_{+},\nu_{\alpha})$, then  we have the inversion formula
\begin{eqnarray}
 f(x) = \int_{\mathbb{R}^d_{+}} \Psi_{\lambda} (-x',x_{d}) \mathcal{F}_{W}(f)(\lambda)d\nu_{\alpha}(\lambda),\quad x \in \mathbb{R}^d_{+}.
\end{eqnarray}
\item[iii)]The Weinstein transform $ \mathcal{F}_{W}$ on $S_\ast( \mathbb{R}^{d})$ exends
uniquely to an isometric isomorphism on
$L^{2}(\mathbb{R}^d_{+},\nu_{\alpha}) $.
\item[iv)] Plancherel formula : For all $f \in
L^{2}(\mathbb{R}^d_{+},\nu_{\alpha})$, we have $$
\int_{\mathbb{R}^{d}_{+}}|f(x)|^{2}d\nu_{\alpha}(x)=
\int_{\mathbb{R}^{d}_{+}}|\mathcal{F}_{W}(f)(x)|^{2}d\nu_{\alpha}(x)
$$
\item[v)] Let $f \in L^{1}(\mathbb{R}^d_{+},\nu_{\alpha})$ be a radial function, then the function $F$ such that $f(x)=F(\|x\|)$
is integrable on $(0,+\infty)$ with respect to the measure
$r^{2\alpha+d}dr$ and its Weinstein transform is given for  $y\in
\mathbb{R}^{d}_{+},$ by
\begin{eqnarray}
  \mathcal{F}_{W}(f)(y) = \mathcal{F}_{B}^{\alpha+\frac{d-1}{2}}(F)(\|y\|),
\end{eqnarray}
where $ \mathcal{F}_{B}^{\gamma}$ is the Fourier-Bessel transform of
order $\gamma$, $\gamma > -\frac{1}{2}$, given by
$$\mathcal{F}_{B}^{\gamma}(F)(\lambda)=
\frac{1}{2^\gamma \Gamma(\gamma+1)}\int_{0}^{+\infty}
 F(r) j_{\gamma}(\lambda r) r^{2\gamma+1}dr,\quad \lambda \in (0,+\infty).$$
\end{itemize}
 For $x, y \in
\mathbb{R}^d_{+}$ and $f$ a continuous function on $\mathbb{R}^{d}$
which is even with respect to the last variable, the Weinstein
translation operator $\tau_{x}$ is given by
\begin{eqnarray}
 \tau_{x}(f)(y) = \int_{0}^{+\infty}f(x'+y',\rho) W_{\alpha}(x_{d},y_{d},\rho) \rho^{2\alpha+1}
  d\rho,
\end{eqnarray}
where  the kernel $W_{\alpha} $ is given by\\
 $W_{\alpha}(x_{d},y_{d},\rho)$
\begin{eqnarray}
=\frac{\Gamma(\alpha+1)((x_d+y_d)^{2}-\rho^{2})^{\alpha-\frac{1}{2}}
(\rho^{2}-(x_d-y_d)^{2})^{\alpha-\frac{1}{2}}}{
2^{2\alpha-1}\sqrt{\pi}\;
\Gamma(\alpha+\frac{1}{2})(x_{d}y_{d}\rho)^{2\alpha}}
\chi_{]|x_{d}-y_{d}|, x_d+y_d[}(\rho).
 \end{eqnarray} For all $x_{d},y_{d} > 0$, we have
\begin{eqnarray}
\int_0^{+\infty}W_\alpha(x_d,y_d,\rho)\rho^{2\alpha+1}
  d\rho=1.
\end{eqnarray}
The Weinstein translation operator satisfies the following
properties.
\begin{itemize}

\item[i)]For all continuous function $f$ on $\mathbb{R}^d $ which is  even
with respect to the last variable and $x,y \in \mathbb{R}^d_{+}$, we
have
$$\tau_{x}(f)(y)= \tau_{y}(f)(x) , \quad \tau_{0}f=f.$$
\item[ii)] For all $f \in S_\ast( \mathbb{R}^{d})$ and $y \in \mathbb{R}^d_{+}$,
 the function $x\rightarrow \tau_{x}f(y) $ belongs to $S_\ast( \mathbb{R}^{d})$.
\item[iii)]For all $f \in L^{p}(\mathbb{R}^{d}_{+},\nu_{\alpha})$, $1\leq p \leq +
\infty$ and $x \in \mathbb{R}^d_{+}$, we have
\begin{eqnarray}
 \|\tau_{x}f\|_{p,\alpha} \leq \|f\|_{p,\alpha}.
\end{eqnarray}

\end{itemize}
For a function $f \in L^{p}(\mathbb{R}^d_{+},\nu_\alpha)$, $p=1$ or
$2$ and $ x \in \mathbb{R}^d_{+}$, the Weinstein translation
$\tau_{x}$ is also defined by the following relation:
\begin{eqnarray}
 \mathcal{F}_{W}(\tau_{x}f)(\lambda)= \Psi_{\lambda} (-x', x_{d})\mathcal{F}_{W}(f)(\lambda) , \quad \lambda \in \mathbb{R}^d_{+}.
\end{eqnarray}
By using the Weinstein translation, we define the convolution
product $f\,\ast_{\alpha}\, g$ of functions $f,g \in
L^1(\mathbb{R}^d_{+},\nu_{\alpha})$ as follows:
$$(f\,\ast_\alpha\, g)(x) = \int_{\mathbb{R}^{d}_{+}} \tau_{x}(f)(-y',y_{d}) g(y)
d\nu_{\alpha}(y),\quad x \in \mathbb{R}^{d}_{+}. $$ This convolution
is commutative and associative and satisfies the following results.
\begin{enumerate}
\item[i)] Let  $1\leq p,q, r \leq + \infty$ such that
$\frac{1}{p} + \frac{1}{q} = 1 + \frac{1}{r}$ (the Young condition).
If  $f \in L^p(\mathbb{R}^d_{+},\nu_\alpha)$ and $g \in
L^q(\mathbb{R}^d_{+},\nu_\alpha)$, then $f\,\ast_\alpha\, g \in
L^r(\mathbb{R}^d_{+},\nu_\alpha)$ and we have
\begin{eqnarray}\|f\,\ast_\alpha\,g\|_{r,\alpha}
\leq \|f\|_{p,\alpha} \|g\|_{q,\alpha}.\end{eqnarray}
\item[ii)] For all $f \in L^{1}(\mathbb{R}^d_{+},\nu_\alpha)$ and $g \in
L^{p}(\mathbb{R}^d_{+},\nu_\alpha)$, $p=1$ or $2$, we have
  \begin{eqnarray}  \mathcal{F}_{W}(f\,\ast_\alpha g) =
\mathcal{F}_{W}(f) \mathcal{F}_{W}(g).\end{eqnarray}
 \end{enumerate}
\section{Weak-type (1.1) of the uncentered maximal function}
In this section, we establish estimates of $\;
\tau_{x}(\chi_{B^+(0,\varepsilon)})(-y',y_d)$, $x,y \in
\mathbb{R}^d_{+}$ and we prove the weak-type $(1,1)$ of the
uncentered maximal function $M_\alpha f$ and we obtain that is
bounded on $L^{p}$ for $1 <p\leq+\infty$.
\\

The following remark plays a key role.
\begin{remark}
 For any $x, y \in \mathbb{R}^d_{+}$ and $\varepsilon > 0$,
 we have\begin{eqnarray} \tau_{x}(\chi_{B^+(0,\varepsilon)})(-y',y_d)=\int_0^{+\infty}\chi_{B^+(0,\varepsilon)}(x'-y',\rho) W_\alpha(x_d,y_d,\rho) \rho^{2\alpha+1}
  d\rho.\end{eqnarray}
  Put  $u=(x'-y',\rho)$, we have $\displaystyle \|u\|^{2}=\sum_{i=1}^{d-1}(x_i-y_i)^2+\rho^2$. Then by
  (2.9) and (2.10), we have
  \begin{eqnarray}
 \sqrt{\sum_{i=1}^{d-1}(x_i-y_i)^2+(x_d-y_d)^2} < \|u\| <\sqrt{\sum_{i=1}^{d-1}(x_i-y_i)^2+(x_d+y_d)^2}.
  \end{eqnarray}
  From (3.1), we have $ u\in B^{+}(0,\varepsilon),$ which gives according to
  (3.2),\\
$\tau_{x}(\chi_{B^+(0,\varepsilon)})(-y',y_{d})=0$ when $\|x-y\|=
 \displaystyle\sqrt{\sum_{i=1}^{d-1}(x_i-y_i)^2+(x_d-y_d)^2}\geq \varepsilon.$
 Then we can assume that $y \in \mathbb{R}^d_{+}$ satisfies $ \|x-y\| <
 \varepsilon.$ Note that $ \|x-y\| <
 \varepsilon$ implies $|x_{d}-y_{d}| < \varepsilon$.
\end{remark}
\begin{lemma}
Let $\lambda \in \mathbb{R}^d_{+}$ and $\varepsilon \in ]0,+\infty[
$, then we have \begin{eqnarray} &i)&
|\mathcal{F}_{W}(\chi_{B^+(0,\varepsilon)})(\lambda)| \leq  c \;
  \varepsilon^{2\alpha+d+1},\\
&ii)&|\mathcal{F}_{W}(\chi_{B^+(0,\varepsilon)})(\lambda)| \leq c\;
\varepsilon^{\alpha +\frac{d}{2}}
\|\lambda\|^{-(\alpha+\frac{d}{2}+1)}.
\end{eqnarray}
Here $c$ is a constant which depends only on $\alpha$ and $d$.
\end{lemma}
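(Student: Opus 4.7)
The plan is to exploit the fact that $\chi_{B^{+}(0,\varepsilon)}$ is a radial function of $x$, namely $\chi_{B^{+}(0,\varepsilon)}(x)=F(\|x\|)$ with $F=\chi_{[0,\varepsilon]}$. Then property (v) of the Weinstein transform (formula (2.8)) reduces $\mathcal{F}_{W}(\chi_{B^{+}(0,\varepsilon)})(\lambda)$ to the Fourier-Bessel transform of order $\gamma:=\alpha+\frac{d-1}{2}$ evaluated at $\|\lambda\|$. Setting $\gamma=\alpha+\frac{d-1}{2}$, this gives the explicit formula
$$\mathcal{F}_{W}(\chi_{B^{+}(0,\varepsilon)})(\lambda)=\frac{1}{2^{\gamma}\Gamma(\gamma+1)}\int_{0}^{\varepsilon}j_{\gamma}(\|\lambda\|\,r)\,r^{2\gamma+1}\,dr,$$
which is the starting point for both estimates.

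For (i), I would use the trivial bound $|j_{\gamma}(t)|\leq j_{\gamma}(0)=1$ (valid since $\gamma>-\frac{1}{2}$) inside the integral and integrate $r^{2\gamma+1}=r^{2\alpha+d}$ over $[0,\varepsilon]$, yielding immediately a constant multiple of $\varepsilon^{2\gamma+2}=\varepsilon^{2\alpha+d+1}$.

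For (ii), the key is to convert the integral into a single Bessel value. Using definition (2.3) to pass from $j_{\gamma}$ to $J_{\gamma}$ and the classical identity $\frac{d}{dt}\bigl[t^{\gamma+1}J_{\gamma+1}(t)\bigr]=t^{\gamma+1}J_{\gamma}(t)$, one obtains
$$\int_{0}^{R} j_{\gamma}(s)\,s^{2\gamma+1}\,ds=2^{\gamma}\Gamma(\gamma+1)\,R^{\gamma+1}J_{\gamma+1}(R).$$
Applying this after the change of variable $s=\|\lambda\|r$ with $R=\varepsilon\|\lambda\|$ collapses the formula to
$$\mathcal{F}_{W}(\chi_{B^{+}(0,\varepsilon)})(\lambda)=\frac{\varepsilon^{\gamma+1}}{\|\lambda\|^{\gamma+1}}\,J_{\gamma+1}(\varepsilon\|\lambda\|).$$
Then I would invoke the uniform estimate $|J_{\nu}(t)|\leq c\,t^{-1/2}$ for $t>0$ (with $\nu=\gamma+1\geq 1$ in our range $\alpha>-\frac{1}{2},\,d\geq 2$) to conclude
$$|\mathcal{F}_{W}(\chi_{B^{+}(0,\varepsilon)})(\lambda)|\leq c\,\frac{\varepsilon^{\gamma+1/2}}{\|\lambda\|^{\gamma+3/2}}=c\,\varepsilon^{\alpha+d/2}\,\|\lambda\|^{-(\alpha+d/2+1)}.$$

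The only real obstacle is the uniform control of $J_{\gamma+1}$: for small arguments the power-series behaviour $J_{\nu}(t)\sim t^{\nu}/(2^{\nu}\Gamma(\nu+1))$ easily dominates $t^{-1/2}$ (since $\nu+\tfrac{1}{2}>0$ on $t\leq 1$), whereas for large $t$ one must invoke the standard asymptotic $J_{\nu}(t)=\sqrt{2/(\pi t)}\cos(t-\nu\pi/2-\pi/4)+O(t^{-3/2})$. Patching these two regimes together yields the required $|J_{\nu}(t)|\leq c\,t^{-1/2}$ and completes the proof of (ii); everything else is a straightforward calculation once the radial reduction (2.8) is in place.
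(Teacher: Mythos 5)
Your proposal is correct and follows essentially the same route as the paper: the radial reduction via (2.8), the exact evaluation of the truncated integral through the identity $\frac{d}{dt}[t^{\gamma+1}J_{\gamma+1}(t)]=t^{\gamma+1}J_{\gamma}(t)$ (the paper records the result as $c\,\varepsilon^{2\alpha+d+1}j_{\alpha+\frac{d}{2}+\frac{1}{2}}(\|\lambda\|\varepsilon)$, which is your formula rewritten with the normalized Bessel function), the bound $|j_{\gamma+1}|\leq 1$ for (i), and the uniform estimate $|J_{\nu}(t)|\leq c\,t^{-1/2}$ for (ii). The only cosmetic difference is that you bound $|j_{\gamma}|\leq 1$ inside the integral for (i) rather than after evaluating it.
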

\begin{proof}
By (2.8), we can write for $
 \lambda \in \mathbb{R}^d_{+}$ and $\varepsilon \in (0,+\infty),$\\
$\mathcal{F}_{W}(\chi_{B^+(0,\varepsilon)})(\lambda)$
\begin{eqnarray}
 &=&\frac{1}{2^{\alpha+\frac{d-1}{2}} \Gamma(\alpha+\frac{d+1}{2})}
 \int_0^{+\infty} \chi_{B^+(0,\varepsilon)}(r) j_{\alpha+\frac{d}{2}-\frac{1}{2}}(\|\lambda\|r) r^{2\alpha+d} dr \nonumber\\
  &=& \frac{\varepsilon^{2\alpha+d+1}}{2^{\alpha+\frac{d+1}{2}} \Gamma(\alpha+\frac{d+3}{2})} \;
 j_{\alpha+\frac{d}{2}+\frac{1}{2}}(\|\lambda\| \varepsilon).\end{eqnarray}
 Since
 $| j_{\alpha+\frac{d}{2}+\frac{1}{2}}(\|\lambda\| \varepsilon)| \leq 1$, we get
 \begin{eqnarray*}
  |\mathcal{F}_{W}(\chi_{B^+(0,\varepsilon)})(\lambda)| \leq c \;
  \varepsilon^{2\alpha+d+1}.
 \end{eqnarray*}
 Now, from (2.3), (3.5) and the fact that the function $z \mapsto
\sqrt{z}J_\alpha(z)$ is bounded on $(0,+\infty)$, we can see that
\begin{eqnarray*}
|\mathcal{F}_{W}(\chi_{B^+(0,\varepsilon)})(\lambda)|&=&
 \frac{\varepsilon^{2\alpha+d+1}}{2^{\alpha+\frac{d+1}{2}} \Gamma(\alpha+\frac{d+3}{2})}
| j_{\alpha+\frac{d}{2}+\frac{1}{2}}(\|\lambda\|
\varepsilon)|\nonumber\\ &=& \frac{1}{2^{\alpha+\frac{d+1}{2}}
\Gamma(\alpha+\frac{d+3}{2})} \;\frac{\varepsilon^{\alpha +
\frac{d}{2}}}{\|\lambda\|^{\alpha+\frac{d}{2}+1} }\sqrt{\|\lambda\|
\varepsilon}\,|J_{\alpha+\frac{d}{2}+\frac{1}{2}} (\|\lambda\|
\varepsilon)|\nonumber\\ &\leq& c\; \varepsilon^{\alpha
+\frac{d}{2}} \|\lambda\|^{-(\alpha+\frac{d}{2}+1)}.\end{eqnarray*}
Hence the lemma is proved .
\end{proof}
\begin{lemma}
 For $\alpha > \frac{d}{2}-1$, there exists $c > 0$ such that
 for any $x \in \mathbb{R}^d_{+}$ with $x_{d}> 0$ and $\varepsilon > 0$,
 we have
\begin{eqnarray}
  0 \leq \tau_{x}(\chi_{B^+(0,\varepsilon)})(-y',y_d) \leq c \; \Big(\frac{\varepsilon}{x_d}\Big)^{2\alpha+1},\quad a.e \,y \in \mathbb{R}^d_{+} .
\end{eqnarray}
Here $c$ is a constant which depends only on $\alpha$ and $d$.
\end{lemma}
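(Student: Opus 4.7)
The plan is to reduce the defining integral for $\tau_{x}(\chi_{B^+(0,\varepsilon)})(-y',y_d)$ to an incomplete Beta integral via an explicit change of variables, and then estimate it by a case split on whether $\varepsilon$ is comparable to $x_d$ or much smaller than it.

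The lower bound $\tau_{x}(\chi_{B^+(0,\varepsilon)})(-y',y_d)\geq 0$ is immediate from the formula in Remark 3.1, since $\chi_{B^+(0,\varepsilon)}$ and $W_\alpha$ are both nonnegative on their effective domains. By that same remark I may restrict attention to $y$ with $\|x-y\|<\varepsilon$; otherwise the integrand vanishes. I set $s^2=\sum_{i=1}^{d-1}(x_i-y_i)^2$ and abbreviate $a=(x_d-y_d)^2$, $b=4x_d y_d$.

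The key step is an algebraic reduction. Starting from the formula in Remark 3.1 and the explicit expression of $W_\alpha$ from Section 2, I would perform the successive substitutions $u=\rho^{2}$ and then $u=a+bt$ with $t\in[0,1]$. Under these, $(x_d+y_d)^2-u$ becomes $b(1-t)$ and $u-(x_d-y_d)^2$ becomes $bt$; the differential $\rho^{2\alpha+1}\,d\rho$ absorbs the $\rho^{-2\alpha}$ factor of $W_\alpha$, and the remaining powers of $b$ and $x_d y_d$ collapse into a single constant. The constraint $(x'-y',\rho)\in B^+(0,\varepsilon)$, equivalently $\rho^{2}\leq\varepsilon^{2}-s^{2}$, translates into $t\leq T:=(\varepsilon^{2}-s^{2}-a)/b$. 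The outcome is the normal form
\[ \tau_{x}(\chi_{B^+(0,\varepsilon)})(-y',y_{d}) \;=\; c_{\alpha}\int_{0}^{\min(T,1)}\bigl[t(1-t)\bigr]^{\alpha-\frac{1}{2}}\,dt, \]
with $c_{\alpha}=2^{2\alpha}\Gamma(\alpha+1)/[\sqrt{\pi}\,\Gamma(\alpha+\frac{1}{2})]$. As a consistency check, taking $T\geq 1$ gives $c_{\alpha}B(\alpha+\frac{1}{2},\alpha+\frac{1}{2})=1$ by the Legendre duplication formula, matching the normalization $\int_{0}^{+\infty}W_\alpha\,\rho^{2\alpha+1}\,d\rho=1$ from Section 2.

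With this normal form, the estimate splits into two cases. If $\varepsilon\geq x_d/4$, then $(\varepsilon/x_d)^{2\alpha+1}\geq 4^{-(2\alpha+1)}$, and the universal bound $\tau_{x}(\chi_{B^+(0,\varepsilon)})\leq 1$ (from the same normalization) gives the conclusion. If instead $\varepsilon<x_d/4$, then $y_d>3x_d/4$, hence $b>3x_d^{2}$ and $T<\varepsilon^{2}/(3x_d^{2})<1/2$; on $[0,T]$ the factor $(1-t)^{\alpha-\frac{1}{2}}$ is bounded by a constant $K_\alpha$ depending only on $\alpha$, and I conclude via
\[ \int_{0}^{T}\bigl[t(1-t)\bigr]^{\alpha-\frac{1}{2}}\,dt \;\leq\; \frac{K_{\alpha}}{\alpha+\frac{1}{2}}\,T^{\alpha+\frac{1}{2}} \;\leq\; c\,(\varepsilon/x_d)^{2\alpha+1}. \]
The main obstacle will be the bookkeeping in the change-of-variable reduction, in particular verifying that all powers of $\rho$, $u$, $b$ and $x_d y_d$ cancel to produce the clean Beta kernel $[t(1-t)]^{\alpha-\frac{1}{2}}$; once that is done and the constant cross-checked against the normalization, the remaining estimates are essentially elementary.
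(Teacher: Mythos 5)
Your proof is correct, and it takes a genuinely different route from the paper. The paper proceeds Fourier-analytically: it mollifies $\chi_{B^+(0,\varepsilon)}$ by an approximate identity $\psi_t$, writes $\tau_x(\chi_{B^+(0,\varepsilon)}\ast_\alpha\psi_t)$ via the inversion formula, splits frequency space into the three regions $\|\lambda\|\le x_d^{-1}$, $x_d^{-1}\le\|\lambda\|\le\varepsilon^{-1}$, $\|\lambda\|\ge\varepsilon^{-1}$, estimates each piece using Lemma 3.1 together with the Bessel decay $|j_\alpha(z)|\le c\,z^{-(\alpha+\frac12)}$, and finally lets $t\to 0^+$ by Fatou; the hypothesis $\alpha>\frac d2-1$ is exactly what makes the high-frequency integral $\int_{\varepsilon^{-1}}^{+\infty}r^{-\alpha+\frac d2-2}\,dr$ converge. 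You instead work directly with the explicit product-formula kernel $W_\alpha$, and your bookkeeping checks out: after $u=\rho^2$ and $u=a+bt$ with $b=4x_dy_d$, the factor $b^{2\alpha}$ cancels $(x_dy_d)^{2\alpha}$ up to $2^{4\alpha}$, yielding $c_\alpha=2^{2\alpha}\Gamma(\alpha+1)/[\sqrt{\pi}\,\Gamma(\alpha+\tfrac12)]$ and the normalization $c_\alpha B(\alpha+\tfrac12,\alpha+\tfrac12)=1$ via Legendre duplication; the case split ($\varepsilon\ge x_d/4$ trivial, $\varepsilon<x_d/4$ giving $y_d>3x_d/4$, $b>3x_d^2$, $T<\varepsilon^2/(3x_d^2)$, hence $T^{\alpha+\frac12}\le c(\varepsilon/x_d)^{2\alpha+1}$) is sound, and Remark 3.1 justifies restricting to $|x_d-y_d|<\varepsilon$. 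What your approach buys is significant: it is entirely elementary, bypasses Lemma 3.1 and the mollification/Fatou step, produces an exact closed form for the translate, and needs only $\alpha>-\tfrac12$ rather than $\alpha>\frac d2-1$, so it actually proves a stronger statement. The only point to make explicit is that you are applying the integral formula (3.1) to the discontinuous function $\chi_{B^+(0,\varepsilon)}$, but the paper's own Remark 3.1 asserts and uses exactly this, so you are on the same footing there.
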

\begin{proof}
Let $x \in \mathbb{R}^d_{+}$ and $\varepsilon > 0$. Using (2.9) and
(2.11) we have
 \begin{eqnarray}
  0\leq \tau_{x}(\chi_{B^+(0,\varepsilon)})(-y',y_d)\leq 1, \quad a.e\,y \in \mathbb{R}^d_{+}.
 \end{eqnarray}
If $0< x_d \leq 2\varepsilon$, we obtain that
\begin{eqnarray*}
 1 \leq \Big(\frac{2\varepsilon}{x_d}\Big)^{2\alpha+1},
\end{eqnarray*}
hence, by (3.7) we deduce (3.6).\\
Therefore we can assume in the following argument that $x_{d} >
2\varepsilon,$ and  in view of Remark 3.1 that $y\in
\mathbb{R}^d_{+} $ satisfies $|x_d - y_d| < \varepsilon$.\\Take
$\psi \in D_\ast( \mathbb{R}^{d}) $
 satisfying $0 \leq \psi(x) \leq 1$, supp $\psi \subset B^{+}(0,1)$ and
 $\|\psi\|_{1,\alpha} = 1$. Put
 $$\psi_t(x) = \frac{1}{t^{2\alpha+d+1}}\;\; \psi\big(\frac{x}{t}\big),\;\;
 t > 0,\; x \in \mathbb{R}^{d}_{+}, $$ the dilation of $\psi.$
We have $\psi_t \in D_\ast(\mathbb{R}^d)$ which gives
$\mathcal{F}_{W}(\psi_t)\in S_\ast(\mathbb{R}^d),$ then we can
assert that both of $\tau_x(\chi_{B^+(0,\varepsilon)}
 \ast_\alpha\, \psi_t)$ and
$\mathcal{F}_{W}(\tau_x(\chi_{B^+(0,\varepsilon)}
 \ast_\alpha\, \psi_{t}))$ are in $L^{1}(\mathbb{R}^d_{+},\nu_\alpha)$. Using (2.7), (2.13) and (2.15), we obtain for
 $y\in \mathbb{R}^d_{+}$ \\
 $ \tau_x(\chi_{B^+(0,\varepsilon)}\ast_{\alpha}\,\psi_{t})(y)$
 \begin{eqnarray}
  &=& \int_{\mathbb{R}_+^{d}}
 \Psi_{\lambda} (-x',x_{d})\Psi_\lambda (-y',y_{d}) \mathcal{F}_{W}
 (\chi_{B^+(0,\varepsilon)})(\lambda) \mathcal{F}_{W}(\psi_t)(\lambda)d\nu_{\alpha}(\lambda)\nonumber\\&=&
 \int_{\mathbb{R}_+^{d}} e^{i[\langle x',\lambda'\rangle+\langle
y',\lambda'\rangle]} j_{\alpha}(\lambda_{d} x_{d})
j_{\alpha}(\lambda_{d} y_{d}) \mathcal{F}_{W}
 (\chi_{B^{+}(0,\varepsilon)})(\lambda) \mathcal{F}_{W}(\psi_{t})(\lambda)
d\nu_{\alpha}(\lambda).\nonumber\\
 \end{eqnarray}
Clearly we have $\|\psi_{t}\|_{1,\alpha} = 1$. According to (2.6),
we have
  \begin{eqnarray}
   |\mathcal{F}_{W}(\psi_{t})(\lambda)| \leq \|\mathcal{F}_{W}(\psi_t)\|_{\infty}
  \leq \|\psi_{t}\|_{1,\alpha} = 1,\quad a.e \, \lambda
   \in  \mathbb{R}^d_{+}.
  \end{eqnarray}
   Let us decompose (3.8) as a sum of three terms:
  \begin{eqnarray}
   \qquad\qquad \tau_{x}(\chi_{B^{+}(0,\varepsilon)}\ast_{\alpha}\,\psi_{t})(y)
   &=&  \int_{\|\lambda\|\leq x_{d}^{-1}} +
   \int_{x_{d}^{-1} \leq \|\lambda\| \leq \varepsilon^{-1}} + \int_{\varepsilon^{-1}\leq
   \|\lambda\|} \nonumber\\
   &=& I_{1} + I_{2} + I_{3} .
  \end{eqnarray}
  From (2.1), (2.4), (3.3) and (3.9), we obtain
\begin{eqnarray}
\qquad\qquad\qquad  |I_{1}| &\leq& c \; \varepsilon^{2\alpha+d+1}
\int_{ \|\lambda\| \leq x_d^{-1} } d\nu_\alpha(\lambda)\nonumber\\
&\leq& c \; \Big(\frac{\varepsilon}{x_{d}}\Big)^{2\alpha+d+1}\nonumber\\
&\leq& c \; \Big(\frac{\varepsilon}{x_{d}}\Big)^{2\alpha+1},\quad
\mbox{for}\; x_{d} > 2\varepsilon.
\end{eqnarray}
To estimate $I_2$, we observe that for $x_{d}> 2\varepsilon $ and
$|x_{d}-y_{d}|< \varepsilon $, we
 have
$$\frac{1}{2} x_{d} < x_{d} - \varepsilon < y_{d} < x_{d} + \varepsilon <
\frac{3}{2} x_{d},$$ so we deduce
\begin{eqnarray}
 0 < y_{d}^{-(\alpha+\frac{1}{2})}
< c\; x_{d}^{-(\alpha+\frac{1}{2})}.
\end{eqnarray}
By (2.3) and the fact that the function $z \mapsto
\sqrt{z}J_\alpha(z)$ is bounded on $(0,+\infty)$, we can write
\begin{eqnarray}
 |j_{\alpha}(z)| \leq c\;
z^{-(\alpha+\frac{1}{2})},
\end{eqnarray}
 then from (2.1), (3.3), (3.9), (3.12) and (3.13), we get
\begin{eqnarray}|I_{2}| &\leq& c\;
\varepsilon^{2\alpha+d+1}
x_d^{-(\alpha+\frac{1}{2})}y_{d}^{-(\alpha+\frac{1}{2})}
 \int_{x_{d}^{-1} \leq \|\lambda\| \leq \varepsilon^{-1}}\lambda_{d}^{-(2\alpha+1)}
d\nu_\alpha(\lambda)\nonumber\\
&\leq& c\; \varepsilon^{2\alpha+d+1} x_{d}^{-2\alpha-1}
\big(\frac{1}{\varepsilon^{d}} - \frac{1}{x_{d}^{d}}\big) \nonumber\\
&\leq&c\;
\varepsilon^{2\alpha+1}x_{d}^{-2\alpha-1} \nonumber \\
& \leq& c\; \Big(\frac{\varepsilon}{x_d}\Big)^{2\alpha+1} ,\quad
\mbox{for}\; x_{d} > 2\varepsilon.
\end{eqnarray}
For $I_{3}$, we use (2.1), (3.4), (3.9), (3.12) and (3.13) and we
find that
\begin{eqnarray*}
|I_3| &\leq& c \;\varepsilon^{\alpha + \frac{d}{2}}x_d^{-(\alpha+\frac{1}{2})}y_d^{-(\alpha+\frac{1}{2})}
 \int_{\varepsilon^{-1}\leq \|\lambda\|}\|\lambda\|^{-(\alpha+\frac{d}{2}+1)}\lambda_{d}^{-(2\alpha+1)}
d\nu_\alpha(\lambda)\\ &\leq& c\; \varepsilon^{\alpha +
\frac{d}{2}}x_{d}^{-2\alpha-1} \int^{+
\infty}_{\varepsilon^{-1}}r^{-\alpha+\frac{d}{2}-2} dr.
\end{eqnarray*}
Since $\alpha > \frac{d}{2}-1$, we obtain
\begin{eqnarray}
 |I_3| &\leq& c\; \varepsilon^{\alpha + \frac{d}{2}} x_d^{-(2\alpha+1)} \varepsilon^{\alpha -\frac{d}{2}+1}\nonumber \\
 &\leq& c \;\Big(\frac{\varepsilon}{x_d}\Big)^{2\alpha+1},\quad
\mbox{for}\; x_{d} > 2\varepsilon.
\end{eqnarray}
Thus we get by (3.10), (3.11), (3.14) and (3.15)
\begin{eqnarray*}
0 \leq\tau_x(\chi_{B^+(0,\varepsilon)}\ast_\alpha\,\psi_t)(-y',y_d)
\leq c\; \Big(\frac{\varepsilon}{x_d}\Big)^{2\alpha+1},\quad
\mbox{for}\; x_{d} > 2\varepsilon.
\end{eqnarray*}
Now using (2.9) and Fatou's Lemma, we can assert that
$$\lim_{t\rightarrow 0^+} \tau_{x}(\chi_{B^+(0,\varepsilon)}\ast_\alpha\,\psi_t)(-y',y_d) =
\tau_x(\chi_{B^+(0,\varepsilon)})(-y',y_d),\quad a.e\, y \in
  \mathbb{R}^d_{+}.$$ Hence, we deduce that $$0 \leq \tau_x
(\chi_{B^+(0,\varepsilon)})(-y',y_d) \leq c\;
\Big(\frac{\varepsilon}{x_d}\Big)^{2\alpha+1},$$ $\;\mbox{ for }
x_{d}> 2\varepsilon , \;a.e\, y \in
  \mathbb{R}^d_{+}\; with\; |x_d-y_d| < \varepsilon$,\; therefore (3.6) is
established.
\end{proof}
\noindent\textbf{Notation :} For $x \in \mathbb{R}^d_{+} $ and
$\varepsilon
> 0 $, we put $$C^+(x,\varepsilon)= B_{d-1}(x',\varepsilon)\times
]\max\{0,x_d-\varepsilon\},x_d+\varepsilon[,$$ with $x=(x',x_{d})$
and $B_{d-1}(x',\varepsilon) $ is the closed ball on
$\mathbb{R}^{d-1} $ with radius $\varepsilon$ centred at $x'$.
\begin{lemma}
 For $\alpha > \frac{d}{2}-1$, there exists $c > 0$ such that
 for any $x \in \mathbb{R}^d_{+}$ and $\varepsilon > 0$, we have
 \begin{eqnarray}
  0 \leq \tau_{x}(\chi_{B^+(0,\varepsilon)})(-y',y_d) \leq c\; \frac{\nu_\alpha(B^+(0,\varepsilon))}{\nu_\alpha(C^+(x,\varepsilon))},\quad a.e\, y \in
  \mathbb{R}^d_{+}.
 \end{eqnarray}
 Here $c$ is a constant which depends only on $\alpha$ and $d$.
\end{lemma}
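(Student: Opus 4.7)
The plan is to deduce (3.16) from the pointwise bound of Lemma~3.2 together with the trivial majorization $\tau_x(\chi_{B^+(0,\varepsilon)})(-y',y_d)\le 1$, after computing the weighted volumes $\nu_\alpha(B^+(0,\varepsilon))$ and $\nu_\alpha(C^+(x,\varepsilon))$. The crucial point is that the ratio $\nu_\alpha(B^+(0,\varepsilon))/\nu_\alpha(C^+(x,\varepsilon))$ has exactly the two regimes visible in Lemma~3.2: it is comparable to $(\varepsilon/x_d)^{2\alpha+1}$ when $x_d>2\varepsilon$, and it is bounded below by a positive constant when $0<x_d\le 2\varepsilon$.

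First I would compute $\nu_\alpha(B^+(0,\varepsilon))$. Since $\chi_{B^+(0,\varepsilon)}$ is radial with associated profile $F=\chi_{[0,\varepsilon]}$, formula (2.1) immediately yields $\nu_\alpha(B^+(0,\varepsilon))=c\,\varepsilon^{2\alpha+d+1}$, with a constant $c>0$ depending only on $\alpha$ and $d$.

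Next I would estimate $\nu_\alpha(C^+(x,\varepsilon))$ from above. Because the cylinder is a Cartesian product and the density $x_d^{2\alpha+1}$ only involves the last coordinate, Fubini gives
$$\nu_\alpha(C^+(x,\varepsilon))=c\,|B_{d-1}(x',\varepsilon)|\int_{\max\{0,x_d-\varepsilon\}}^{x_d+\varepsilon}t^{2\alpha+1}\,dt,$$
with $|B_{d-1}(x',\varepsilon)|\le c\,\varepsilon^{d-1}$. For $x_d>2\varepsilon$, the integrand is controlled by $(x_d+\varepsilon)^{2\alpha+1}\le(3x_d/2)^{2\alpha+1}$ on an interval of length $2\varepsilon$, hence $\nu_\alpha(C^+(x,\varepsilon))\le c\,\varepsilon^{d}x_d^{2\alpha+1}$. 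For $0<x_d\le 2\varepsilon$, the upper limit of integration is at most $3\varepsilon$, so the integral is bounded by $(3\varepsilon)^{2\alpha+2}/(2\alpha+2)$ and $\nu_\alpha(C^+(x,\varepsilon))\le c\,\varepsilon^{2\alpha+d+1}$.

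Finally, I would combine these estimates. When $x_d>2\varepsilon$, the two bounds give $\nu_\alpha(B^+(0,\varepsilon))/\nu_\alpha(C^+(x,\varepsilon))\ge c\,(\varepsilon/x_d)^{2\alpha+1}$, and Lemma~3.2 provides the matching upper bound on $\tau_x(\chi_{B^+(0,\varepsilon)})(-y',y_d)$, so (3.16) follows. When $0<x_d\le 2\varepsilon$, the two bounds give $\nu_\alpha(B^+(0,\varepsilon))/\nu_\alpha(C^+(x,\varepsilon))\ge c>0$, and the trivial majorization $\tau_x(\chi_{B^+(0,\varepsilon)})(-y',y_d)\le 1$ (which is inequality (3.7) in the proof of Lemma~3.2, coming from (2.9) and (2.11)) is enough. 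The main obstacle is not conceptual but organizational: choosing the case split at $x_d=2\varepsilon$ so that Lemma~3.2 applies in the nontrivial regime, and checking that the weighted-volume computations for the cylinder transition cleanly across $x_d\sim\varepsilon$, which is exactly where both sides of (3.16) change behaviour.
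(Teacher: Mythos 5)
Your proposal is correct and follows essentially the same route as the paper: compute $\nu_\alpha(B^+(0,\varepsilon))=c\,\varepsilon^{2\alpha+d+1}$ from (2.1), bound $\nu_\alpha(C^+(x,\varepsilon))$ by Fubini in the two regimes, and then invoke the trivial bound $\tau_x(\chi_{B^+(0,\varepsilon)})\le 1$ for small $x_d$ and Lemma~3.2 for large $x_d$. The only (immaterial) difference is that you split at $x_d=2\varepsilon$ where the paper splits at $x_d=\varepsilon$; since Lemma~3.2 holds for all $x_d>0$, both choices work.
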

\begin{proof}
Let $x \in \mathbb{R}^d_{+}$ and $\varepsilon > 0$. Using (2.1), we
have
\begin{eqnarray}
\nu_\alpha(B^+(0,\varepsilon))=
\frac{\varepsilon^{2\alpha+d+1}}{2^{\alpha+\frac{d-1}{2}}(2\alpha+d+1)
\Gamma(\alpha+\frac{d+1}{2})}\;.
 \end{eqnarray}
 On the one hand, we get for $x_{d} \leq \varepsilon,$
 $$C^+(x,\varepsilon)= B_{d-1}(x',\varepsilon)\times [0,x_d+\varepsilon[,$$
 then, we obtain
  \begin{eqnarray*}
   \nu_\alpha(C^+(x,\varepsilon))
&=&\frac{1}{(2\pi)^{\frac{d-1}{2}} 2^{\alpha} \Gamma(\alpha+1)}
 \int_{B_{d-1}(x',\varepsilon)} dy_1...dy_{d-1}
\int_0^{x_d+\varepsilon}y_{d}^{2\alpha+1} dy_d\\ &\leq & c\;
\varepsilon^{2(\alpha+1)} \int_{B_{d-1}(x',\varepsilon)}
dy_1...dy_{d-1} \leq c \;\varepsilon^{2\alpha+d+1}.
  \end{eqnarray*}
Using (3.17), we deduce
$$ \nu_\alpha(C^+(x,\varepsilon)) \leq  c\; \nu_\alpha(B^+(0,\varepsilon)),$$
  then by (3.7), we obtain (3.16) for $x_{d} \leq
\varepsilon$.
\\On the other hand, we have for
$x_{d} > \varepsilon,$  $$\;C^+(x,\varepsilon)=
B_{d-1}(x',\varepsilon)\times [x_d-\varepsilon,x_d+\varepsilon[,$$
 then , we obtain
\begin{eqnarray*}
   \nu_\alpha(C^+(x,\varepsilon))& =& \frac{1}{(2\pi)^{\frac{d-1}{2}} 2^{\alpha} \Gamma(\alpha+1)}
\int_{B_{d-1}(x',\varepsilon)} dy_1...dy_{d-1} \int_{x_d-\varepsilon}^{x_d+\varepsilon} y_d^{2\alpha+1}dy_{d}\\
&\leq & c \; \varepsilon^{d-1} \;(x_d+\varepsilon)^{2\alpha+1}
\times \varepsilon
\\&\leq & c \; \varepsilon^{d} \;x_d^{2\alpha+1}.
  \end{eqnarray*}
  Using (3.17), we get
  $$ \nu_\alpha(C^+(x,\varepsilon)) \leq  c\; \nu_\alpha(B^+(0,\varepsilon))
\Big(\frac{x_d}{\varepsilon}\Big)^{2\alpha+1},$$
  then by (3,6), we obtain (3.16) for $x_{d} > \varepsilon$, which proves the result.
\end{proof}
According to (\cite{E}, Lemma 1.6), we have the following Vitali
covering lemma.
\begin{lemma}
  Let $E$ be
a measurable subset of $\mathbb{R}^d_{+}$ (with respect to
$\nu_\alpha$) which is covered by the union of a family
$\{B_{j}^{+}\}$ where $B_{j}^{+} = B^{+}(x_j, r_j)$. Then from this
family we can select a  subfamily, $B_{1}^{+}, B_{2}^{+},...$ (which
may be finite) such that $ B_{i}^{+}\cap B_{j}^{+}= \emptyset $ for
$i \neq j $ and
$$\sum_{h} \nu_{\alpha} (B_{h}^{+}) \geq c\; \nu_\alpha(E).$$
\end{lemma}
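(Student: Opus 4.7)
The plan is a Vitali-type greedy selection, which I would preface by establishing that $\nu_\alpha$ is a doubling measure on half-balls: there exists $C_0 = C_0(\alpha,d)$ such that $\nu_\alpha(B^+(x,5r))\leq C_0\,\nu_\alpha(B^+(x,r))$ for every $x\in\mathbb{R}^d_+$ and $r>0$. Splitting into the cases $x_d\geq 2r$ and $x_d<2r$ and integrating the weight $x_d^{2\alpha+1}$ explicitly gives $\nu_\alpha(B^+(x,r))\asymp r^d\,x_d^{2\alpha+1}$ in the first regime and $\nu_\alpha(B^+(x,r))\asymp r^{2\alpha+d+1}$ in the second (mirroring the computation carried out in Lemma 3.4 and (3.17)), from which doubling follows with a uniform constant.

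With doubling available, I would run the classical greedy procedure. Assuming first that $R:=\sup_j r_j$ is finite, pick $B^+_1$ with $r_1>R/2$; inductively, having chosen pairwise disjoint $B^+_1,\dots,B^+_{k-1}$, let $R_k$ be the supremum of $r_j$ over the $B^+_j$ disjoint from $\bigcup_{i<k}B^+_i$ and select $B^+_k$ with $r_k>R_k/2$. The geometric core of the argument is the claim that every $B^+_j$ in the original family lies inside some $5B^+_i:=B^+(x_i,5r_i)$: by maximality $B^+_j$ meets some selected ball $B^+_i$, the greedy rule forces $r_i>r_j/2$, and a triangle-inequality argument (in $\mathbb{R}^d$, then intersected with $\mathbb{R}^d_+$) yields the inclusion. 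Summing and applying doubling,
\begin{eqnarray*}
\nu_\alpha(E)\;\leq\;\sum_j\nu_\alpha(B^+_j)\;\leq\;\sum_i\nu_\alpha\bigl(B^+(x_i,5r_i)\bigr)\;\leq\;C_0\sum_i\nu_\alpha(B^+_i),
\end{eqnarray*}
so the conclusion holds with $c=1/C_0$. The case $R=+\infty$ is reduced to the bounded-radius setting by a dyadic exhaustion of the index set according to the size of $r_j$.

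The main obstacle is the doubling estimate near the hyperplane $\{x_d=0\}$, where the weight $x_d^{2\alpha+1}$ degenerates. The upper bound on $\nu_\alpha(B^+(x,5r))$ is immediate, but producing a matching lower bound on $\nu_\alpha(B^+(x,r))$ when $x_d$ is small requires locating a sub-region of the half-ball on which the weight is comparable to its maximum; I would do this by restricting the $x_d$-integration to an interval of length of order $r$ chosen to keep $x_d$ of order $\max\{x_d,r\}$, and using the Euclidean $(d-1)$-dimensional volume of the transverse slice. Once this two-sided bound is secured, the geometric step and the summation above are routine and the lemma follows.
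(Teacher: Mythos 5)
Your proof is correct, but it is worth noting that the paper itself gives no proof of this lemma at all: it simply invokes Lemma 1.6 of Stein's book, which is the Vitali covering lemma for Lebesgue measure on $\mathbb{R}^n$. Transplanting that statement to half-balls $B^+(x,r)=B(x,r)\cap\mathbb{R}^d_+$ with the weighted measure $\nu_\alpha$ is only legitimate because $\nu_\alpha$ is doubling on such half-balls, and that is precisely the ingredient you identify and verify; in this sense your write-up is more complete than the paper's. Your two-regime computation is right: for $x_d\geq 2r$ the weight is comparable to $x_d^{2\alpha+1}$ throughout the ball, giving $\nu_\alpha(B^+(x,r))\asymp r^d x_d^{2\alpha+1}$, while for $x_d<2r$ the inscribed box $B_{d-1}(x',r/2)\times\bigl((x_d-r/2)_+,x_d+r/2\bigr)$ contains a subinterval of $y_d$-values of length $\asymp r$ on which $y_d\asymp r$, yielding the matching lower bound $c\,r^{2\alpha+d+1}$; the two regimes agree when $x_d\asymp r$, so the doubling constant is uniform. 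The greedy $5r$-selection and the final summation are then the standard argument, and the inclusion $B^+_j\subset B^+(x_i,5r_i)$ follows from the Euclidean triangle inequality since $B^+_j\subset\mathbb{R}^d_+$ automatically. Two small caveats: the case $\sup_j r_j=+\infty$ is not really handled by the paper's cited source either (Stein assumes bounded diameters, as does your greedy scheme), and your ``dyadic exhaustion'' remark is left vague; in the actual application to Theorem 3.1 this is harmless, because the selection criterion forces $\nu_\alpha(B^+(z,\varepsilon))<\|f\|_{1,\alpha}/\lambda$ and your own lower bound $\nu_\alpha(B^+(z,\varepsilon))\geq c\,\varepsilon^{2\alpha+d+1}$ then bounds the radii. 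It would be worth stating the bounded-radius hypothesis explicitly rather than appealing to an exhaustion you do not carry out.
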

 We recall that for $x \in \mathbb{R}^d_{+},$ $$ M_\alpha f(x) = \sup_{\varepsilon > 0,\;z\in B^+(x,\varepsilon)}
\frac{1}{\nu_\alpha(B^+(0,\varepsilon))}
 \left|\int_{\mathbb{R}^d_{+}}
 f(y)
 \tau_{z}(\chi_{B^+(0,\varepsilon)})(-y',y_{d})d\nu_\alpha(y)\right|,$$
so, we can write also  $$ M_\alpha f(x) = \sup_{\varepsilon
> 0,\;z\in B^+(x,\varepsilon)}
\frac{1}{\nu_\alpha(B^+(0,\varepsilon))}
 \Big|f\,\ast_\alpha\chi_{B^+(0,\varepsilon)}(z)\Big|.$$
\begin{theorem}
 The uncentered
maximal function  $M_{\alpha} f$ is of weak type (1,1).
\end{theorem}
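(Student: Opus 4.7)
For a fixed $\lambda > 0$, the plan is to estimate the level set $E_\lambda = \{x \in \mathbb{R}^d_+ : M_\alpha f(x) > \lambda\}$ via a standard covering argument. For each $x \in E_\lambda$, the definition of the supremum furnishes a radius $\varepsilon_x > 0$ and a point $z_x \in B^+(x,\varepsilon_x)$ such that
$$\frac{1}{\nu_\alpha(B^+(0,\varepsilon_x))}\,\Big|f \ast_\alpha \chi_{B^+(0,\varepsilon_x)}(z_x)\Big| > \lambda.$$
Since $z_x \in B^+(x,\varepsilon_x)$ is equivalent to $x \in B^+(z_x,\varepsilon_x)$, the family $\{B^+(z_x,\varepsilon_x)\}_{x \in E_\lambda}$ is a cover of $E_\lambda$ suitable for Vitali.

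The next step is to convert the defining inequality into a measure-theoretic bound on each ball. By Remark 3.1 the integrand $\tau_{z_x}(\chi_{B^+(0,\varepsilon_x)})(-y',y_d)$ is supported in $B^+(z_x,\varepsilon_x)$, while Lemma 3.3 dominates it by $c\,\nu_\alpha(B^+(0,\varepsilon_x))/\nu_\alpha(C^+(z_x,\varepsilon_x))$. Combining these,
$$\Big|f \ast_\alpha \chi_{B^+(0,\varepsilon_x)}(z_x)\Big| \leq c\,\frac{\nu_\alpha(B^+(0,\varepsilon_x))}{\nu_\alpha(C^+(z_x,\varepsilon_x))} \int_{B^+(z_x,\varepsilon_x)} |f(y)|\,d\nu_\alpha(y).$$
Dividing by $\nu_\alpha(B^+(0,\varepsilon_x))$ and using the inclusion $B^+(z_x,\varepsilon_x) \subset C^+(z_x,\varepsilon_x)$ turns the assumption $M_\alpha f(x) > \lambda$ into
$$\nu_\alpha(B^+(z_x,\varepsilon_x)) \leq \nu_\alpha(C^+(z_x,\varepsilon_x)) \leq \frac{c}{\lambda} \int_{B^+(z_x,\varepsilon_x)} |f(y)|\,d\nu_\alpha(y).$$

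I then invoke the Vitali covering lemma (Lemma 3.4) on $\{B^+(z_x,\varepsilon_x)\}_{x \in E_\lambda}$ to extract a pairwise disjoint subfamily $\{B^+(z_k,\varepsilon_k)\}_k$ satisfying $\sum_k \nu_\alpha(B^+(z_k,\varepsilon_k)) \geq c\,\nu_\alpha(E_\lambda)$. Summing the previous inequality over $k$ and exploiting the disjointness of the balls yields
$$\nu_\alpha(E_\lambda) \leq c \sum_k \nu_\alpha(B^+(z_k,\varepsilon_k)) \leq \frac{c}{\lambda} \sum_k \int_{B^+(z_k,\varepsilon_k)} |f(y)|\,d\nu_\alpha(y) \leq \frac{c}{\lambda}\,\|f\|_{1,\alpha},$$
which is exactly the weak-type $(1,1)$ bound claimed.

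The delicate point lies in the passage through Lemma 3.3: the factor $\nu_\alpha(B^+(0,\varepsilon))/\nu_\alpha(C^+(z,\varepsilon))$ is precisely what cancels the normalization $\nu_\alpha(B^+(0,\varepsilon))$ built into the definition of $M_\alpha$, leaving $\nu_\alpha(C^+(z,\varepsilon))$ as the effective denominator, which by inclusion dominates $\nu_\alpha(B^+(z,\varepsilon))$. Without Lemma 3.3, the trivial bound $\tau_z(\chi_{B^+(0,\varepsilon)}) \leq 1$ would leave the incorrect factor $\nu_\alpha(B^+(0,\varepsilon))$ in the denominator, which mismatches the ball $B^+(z,\varepsilon)$ entering the Vitali step when $z_d \gg \varepsilon$, and the disjointness argument would fail to close.
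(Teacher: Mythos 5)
Your proof is correct and follows essentially the same route as the paper: Remark 3.1 to localize the support of $\tau_{z}(\chi_{B^+(0,\varepsilon)})$ in $B^+(z,\varepsilon)$, Lemma 3.3 together with the inclusion $B^+(z,\varepsilon)\subset C^+(z,\varepsilon)$ to cancel the normalization, and then the Vitali covering lemma on the balls $B^+(z_x,\varepsilon_x)$. The only cosmetic difference is that the paper first records the pointwise domination $M_\alpha f \leq c\,\tilde M_\alpha f$ by an auxiliary uncentered Hardy--Littlewood maximal function before running the covering argument, whereas you fold that reduction directly into the level-set estimate.
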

\begin{proof}
 Let $\varepsilon > 0,\; x \in
\mathbb{R}^d_{+},\;z \in B^{+}(z,\varepsilon)$ and $f \in
L^{1}(\mathbb{R}^d_{+},\nu_\alpha)$. Using Remark 3.1, we have
\begin{eqnarray*}
\Big|f\,\ast_\alpha\chi_{B^+(0,\varepsilon)}(z)\Big|
&\leq&\int_{B^{+}(z,\varepsilon)} |f(y)|
\tau_{z}(\chi_{B^{+}(0,\varepsilon)})(-y',y_{d})d\nu_{\alpha}(y).
\end{eqnarray*}
By (3.16), we obtain
\begin{eqnarray*}
\Big|f\,\ast_\alpha\chi_{B^+(0,\varepsilon)}(z)\Big| &\leq& c\;\,
\frac{\nu_{\alpha}(B^{+}(0,\varepsilon))}{\nu_\alpha(C^{+}(z,\varepsilon))}\int_{B^{+}(z,\varepsilon)}
|f(y)| d\nu_{\alpha}(y).
\end{eqnarray*}
Since,  $B^{+}(z,\varepsilon)\subset C^{+}(z,\varepsilon) $, then

$$\Big|f\,\ast_\alpha\chi_{B^{+}(0,\varepsilon)}(z)\Big| \leq c \frac{\nu_\alpha(B^{+}(0,\varepsilon))}{\nu_{\alpha}(B^{+}(z,\varepsilon))}\int_{B^{+}(z,\varepsilon)} |f(y)| d\nu_{\alpha}(y).$$
Hence we deduce that
\begin{eqnarray}
 M_{\alpha} f (x) \leq
c\;\tilde{M}_\alpha f(x),
\end{eqnarray}
where $\tilde{M}_{\alpha} f $ is defined by
$$\tilde{M}_\alpha f(x)=\sup_{\varepsilon > 0,\;z \in B^{+}(x,\varepsilon)}
\frac{1} {\nu_{\alpha}\big(B^{+}(z,\varepsilon)\big)}  \int_{
B^{+}(z,\varepsilon)}|f(y)|d\nu_\alpha(y).$$ For $\lambda > 0$, put
$$\tilde{E}_{\lambda} = \{x \in \mathbb{R}^{d}_{+};\; \tilde{M}_{\alpha} f(x) >
\lambda \}.$$ Then, for each $x \in \tilde{E}_\lambda$, there exists
$\varepsilon >0 $ and $z\in B^{+}(x,\varepsilon),$ such that
\begin{eqnarray}
\int_{ B^{+}(z,\varepsilon)} |f(y)| d\nu_{\alpha}(y) > \lambda\;
\nu_{\alpha}\big(B^{+}(z,\varepsilon)\big).
\end{eqnarray}
Furthermore, note that $x\in B^{+}(z,\varepsilon)$, then when $x$
runs through  the set $\tilde{E}_{\lambda}$, the union of the
corresponding $B^{+}(z,\varepsilon)$ covers $\tilde{E}_{\lambda}$.
Thus using Lemma 3.4, we can select a disjoint subfamily $
B^{+}(z_{1},\varepsilon_{1}), B^{+}(z_{2},\varepsilon_{2}),...$
(which may be finite) such that
\begin{eqnarray}
 \sum_{h} \nu_{\alpha}\big(B^{+}(z_{h},\varepsilon_{h})\big)\geq
c\; \nu_{\alpha}(\tilde{E}_{\lambda}).
\end{eqnarray}
We have
\begin{eqnarray*}
 \Big(\int_{y\in \displaystyle \cup B^{+}(z_{h},\varepsilon_{h})} |f(y)| d\nu_{\alpha}(y)\Big)
= \sum_{h}\Big(\int_{y \in B^{+}(z_{h},\varepsilon_{h})} |f(y)|
d\nu_{\alpha}(y)\Big),
\end{eqnarray*}
 applying (3.19) and (3.20) to each of the mutually disjoint subfamily, we get
\begin{eqnarray*}
 \Big(\int_{y \in \displaystyle \cup B^{+}(z_{h},\varepsilon_{h})} |f(y)| d\nu_{\alpha}(y)\Big) >
\lambda \sum_{h} \nu_{\alpha}\big(B^{+}(z_{h},\varepsilon_{h})\big)
\geq \lambda\; c\; \nu_{\alpha}(\tilde{E}_{\lambda}).
\end{eqnarray*}
 But since the first member
of this inequality is  majorized by $\|f\|_{1,\alpha}$, we obtain
$$\nu_\alpha (\tilde{E}_{\lambda}) \leq
 c \;\frac{\|f\|_{1,\alpha}}{\lambda},$$ which gives that $\tilde{M}_{\alpha}f$ is of weak type $( 1,
1)$ and hence from (3.18) the same is true for $M_{\alpha}f$.
\end{proof}
As consequence of Theorem 3.1, we obtain the following corollary.
\begin{corollary}
 If $1 < p \leq + \infty \; \mbox{and} \; f \in L^{p}(\mathbb{R}^d_{+},\nu_{\alpha}),$ then we
have
$$M_{\alpha}f \in L^{p}(\mathbb{R}^d_{+}, \nu_{\alpha}) \quad  \mbox{and} \quad
\|M_{\alpha}f\|_{p,\alpha} \leq c\; \|f\|_{p,\alpha}.$$
\end{corollary}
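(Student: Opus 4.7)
The plan is to obtain the strong $(p,p)$ bound by real interpolation between the weak-type $(1,1)$ inequality of Theorem~3.1 and a trivial strong-type $(\infty,\infty)$ bound, using the fact that $M_\alpha$ is sublinear.

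\textbf{Step 1 (the endpoint $p=\infty$).} I would first show that $M_\alpha$ is of strong type $(\infty,\infty)$ with constant $1$. Given $f\in L^\infty(\mathbb{R}^d_+,\nu_\alpha)$, I use the rewriting
$$M_\alpha f(x)=\sup_{\varepsilon>0,\;z\in B^+(x,\varepsilon)}\frac{|f\ast_\alpha\chi_{B^+(0,\varepsilon)}(z)|}{\nu_\alpha(B^+(0,\varepsilon))}$$
already noted before Theorem~3.1. By Young's inequality (2.14) with $p=\infty$, $q=1$, $r=\infty$,
$$|f\ast_\alpha\chi_{B^+(0,\varepsilon)}(z)|\leq\|f\|_\infty\,\|\chi_{B^+(0,\varepsilon)}\|_{1,\alpha}=\|f\|_\infty\,\nu_\alpha(B^+(0,\varepsilon)),$$
so dividing by $\nu_\alpha(B^+(0,\varepsilon))$ and taking the supremum gives $M_\alpha f(x)\leq\|f\|_\infty$ for every $x\in\mathbb{R}^d_+$, hence $\|M_\alpha f\|_\infty\leq\|f\|_\infty$.

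\textbf{Step 2 (sublinearity).} Since $M_\alpha f$ is defined as a supremum of moduli of linear expressions in $f$, the pointwise inequalities
$$M_\alpha(f+g)(x)\leq M_\alpha f(x)+M_\alpha g(x),\qquad M_\alpha(\lambda f)(x)=|\lambda|\,M_\alpha f(x)$$
are immediate from the triangle inequality inside the supremum. Thus $M_\alpha$ is a sublinear operator on $L^1+L^\infty$.

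\textbf{Step 3 (Marcinkiewicz interpolation).} With $M_\alpha$ sublinear, Theorem~3.1 giving weak-type $(1,1)$, and Step~1 giving strong-type $(\infty,\infty)$, the Marcinkiewicz interpolation theorem on the measure space $(\mathbb{R}^d_+,\nu_\alpha)$ yields that $M_\alpha$ is of strong type $(p,p)$ for every $1<p<\infty$, with a bound
$$\|M_\alpha f\|_{p,\alpha}\leq c_p\,\|f\|_{p,\alpha},$$
where $c_p$ depends only on $p$, $\alpha$, $d$. Combined with the $L^\infty$ bound from Step~1, this gives the corollary for all $1<p\leq+\infty$.

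There is no real obstacle here: both endpoint estimates reduce to Theorem~3.1 and Young's inequality, and the measure $\nu_\alpha$ is $\sigma$-finite so the standard Marcinkiewicz theorem applies verbatim. The only point to verify carefully is the sublinearity in Step~2, which follows directly from the supremum definition of $M_\alpha$.
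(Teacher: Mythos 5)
Your proposal is correct and follows essentially the same route as the paper, which simply invokes Theorem~3.1 together with the standard Euclidean argument (Stein's Theorem~1, Section~1.3, i.e.\ Marcinkiewicz interpolation between the weak $(1,1)$ bound and the trivial $L^\infty$ bound obtained from Young's inequality). Your write-up just makes explicit the sublinearity and the $p=\infty$ endpoint that the paper leaves to the cited references.
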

\begin{proof} Using Theorem 3.1, (\cite{He}, Corollary 21.72) and
proceeding in the same manner as in the proof on Euclidean spaces
(see for example Theorem 1 in \cite{E}, section 1.3), we obtain the
results.

 \end{proof}

\end{document}